\numberwithin{equation}{section}
\theoremstyle{plain}
\newtheorem{thm}{Theorem}[section]
\newtheorem{Lemma}[thm]{Lemma}
\newtheorem{corol}[thm]{Corollary}
\newtheorem{rem}[thm]{Remark}
\theoremstyle{definition}
\newtheorem{defn}[thm]{Definition}
\begin{document}
\title[Besov-Hankel norms in terms of the Continuous  Bessel wavelet transform]{ Besov-Hankel norms in terms of the Continuous  Bessel wavelet transform}
\author[Ashish Pathak and Dileep Kumar]{Ashish Pathak and Dileep Kumar  \\
 Department of Mathematics \\
 Institute of Science \\
 Banaras Hindu University \\
    Varanasi-221005, India.}
\date{}
\keywords{ Besov-Hankel space;Continuous Bessel wavelet transform; Hankel transform; Hankel convolution}
\subjclass[2010]{ 33A40; 42C10;44A05;42C40}
\thanks{$^{1}$ E-mail: ashishpathak@bhu.ac.in,   $^{2}$ E-mail: dkbhu07@gmail.com }
\begin{abstract}
  	In this paper, we extend the concept of continuous Bessel wavelet transform in $L^p$-space and derived the Parseval's as well as the inversion formulas. By using Bessel wavelet coefficients we characterized the Besov- Hankel space.
  \end{abstract}
\maketitle
\section{Introduction}
In this paper, as usual $L^{p,\sigma}(\mathbb{R}^+=(0, \infty))$ denotes the weighted $L^p -$ space with norm 
\begin{eqnarray}
\label{dkp6: eq 1.1}
\| f \|_{L^{p,\sigma}}= \| f \|_{p,\sigma}=\left( \int_{0}^{\infty} |f(x)|^p  d \sigma (x) \right)^{\frac{1}{p}} , (1 \leq p < \infty) , 
\end{eqnarray}
\begin{eqnarray}
\label{dkp6: eq 1.2}
\| f \|_{\infty,\sigma} =  \text{ess sup}_{0<x<\infty} |f(x)| < \infty.
\end{eqnarray}
 The Hankel transformation of the function $f \in L^{1,\sigma} (\mathbb{R}^+)$ is defined by 
 \begin{eqnarray}
 \label{dkp6: eq 1.3}
 	 \hat{f} (x)= \int\limits_{0}^{\infty} j(xt) f(t) d\sigma(t) ,\,\,\,\,  0 \leq x < \infty, 
 \end{eqnarray}
 where $ \sigma(t) = \frac{t^{2\nu +1}}{2^{\nu + \frac{1}{2}} \varGamma(\nu + \frac{3}{2} )}$,   $j(.) = C_\nu x^{\frac{1}{2} - \nu} J_{\nu - \frac{1}{2}}$, $ C_\nu = 2^{\nu + \frac{1}{2}} \varGamma(\nu + \frac{1}{2} )$ and  $J_{\nu - \frac{1}{2}}$ denote the Bessel function of first kind of order $ \nu - \frac{1}{2} $. \\
If  $\hat{f} \in L^{1,\sigma} (\mathbb{R}^+)$, then the inverse of Hankel transformations is given by 
\begin{eqnarray}
\label{dkp6: eq 1.4}
f(x) = \int\limits_{0}^{\infty} j(xt) \hat{f}(t) d\sigma(t) ,\,\,\,\,  0 < x < \infty .
\end{eqnarray}
Also, Parseval's formula of the Hankel transformation for $f, g \in  L^{1,\sigma} \bigcap  L^{2,\sigma} $  is given by 
\begin{eqnarray}
\label{dkp6: eq 1.5}
\int\limits_{0}^{\infty} \hat{f}(x) \hat{g}(x) d\sigma (x) =\int\limits_{0}^{\infty} f(u) g(u) d\sigma (u).
\end{eqnarray}
By denseness and continuity the Parseval's formula can be extended to all $ f, g \in L^{2,\sigma}(\mathbb{R}^+)$. Hence Hankel transform  is isometry on $L^{2,\sigma} (\mathbb{R}^+)$. \\ \\
If $f, g \in  L^{1,\sigma} (\mathbb{R}^+) $, then the convolution associated with the Hankel is defined as  (see \cite{hai1965}) 
\begin{eqnarray}
\label{dkp6: eq 1.6}
(f\# g)(x)= \int\limits_{0}^{\infty}  f(x,y) g(y) d\sigma (y) ,
\end{eqnarray}
  where the Hankel  translation is  given by
\begin{eqnarray}
\label{dkp6: eq 1.7}
 f(x,y) =\tau_y f(x) =: \int\limits_{0}^{\infty} f(z) D(x,y,z)  d\sigma (z),\,\,\, 0 < x,y < \infty, 
\end{eqnarray}
and 
\begin{eqnarray}
\label{dkp6: eq 1.8}
D(x,y,z) &=& \int\limits_{0}^{\infty} j(xu) j(yu) j(zu) d\sigma (u) \nonumber \\ &=& 2^{3\nu - \frac{5}{2}} [\varGamma(\nu + \frac{1}{2} )]^2 \left( \varGamma(\nu) \pi^\frac{1}{2}\right)^{-1} (xyz)^{-2\nu -1} [\Delta(xyz)]^{2 \nu -2}
\end{eqnarray}
where $\Delta $ denotes the area of a triangle.  $  D(x,y,z) $ is symmetric in $x,y,z $. \\
From (1.4) and (\ref{dkp6: eq 1.8}), we have 
\begin{eqnarray}
\label{dkp6: eq 1.9}
\int\limits_{0}^{\infty} j(zu) D(x,y,z) d\sigma (z)  =  j(xu) j(yu),  \,\, 0 < x,y < \infty, \,\, 0 \leq u < \infty,
\end{eqnarray}
and for $u=0$, we get
\begin{eqnarray}
\label{dkp6: eq 1.10}
\int\limits_{0}^{\infty}  D(x,y,z) d\sigma (z)  = 1,
\end{eqnarray} 
and 
\begin{eqnarray}
\label{dkp6: eq 1.11}
\hat{(f\# g)} (x)=\hat{f}(x) \hat{g}(x), \,\, 0 \leq x < \infty.
\end{eqnarray} 
Now, we recall some  properties of Hankel convolution ( see \cite{hi1960},\cite{rspathak2003},\cite{bjrm1998}, \cite{upsi2015} ) which are useful throught the paper.
\begin{Lemma}
\label{dkp6:lemma 1.1}
	Let $f \in  L^{p,\sigma}(\mathbb{R}^+),  \,\, 1 \leq p < \infty$. Then we have
	\begin{eqnarray}
	\label{dkp6: eq 1.12}
	||\tau_y f(x)||_{p,\sigma}\leq ||f||_{p,\sigma}  .
	\end{eqnarray}	 
\end{Lemma}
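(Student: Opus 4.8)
The plan is to exploit two structural features of the kernel $D(x,y,z)$: its nonnegativity and the normalization (\ref{dkp6: eq 1.10}), which together say that for each fixed pair $x,y$ the measure $d\mu_{x,y}(z)=D(x,y,z)\,d\sigma(z)$ is a probability measure on $\mathbb{R}^+$. First I would record that $D(x,y,z)\ge 0$; this is immediate from the closed form in (\ref{dkp6: eq 1.8}), since the area factor $[\Delta(xyz)]^{2\nu-2}$ together with the positive constant in front is nonnegative (the area vanishes when $x,y,z$ fail the triangle inequality, so $D$ is supported on admissible triples). Combined with (\ref{dkp6: eq 1.10}), this makes each $\mu_{x,y}$ a genuine probability measure, which is the engine of the whole estimate.

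With this in hand, apply Jensen's inequality to the convex map $t\mapsto |t|^{p}$ (valid for $1\le p<\infty$) against the probability measure $\mu_{x,y}$:
\begin{eqnarray*}
|\tau_y f(x)|^{p}=\left|\int_{0}^{\infty} f(z)\,D(x,y,z)\,d\sigma(z)\right|^{p}
\leq \int_{0}^{\infty} |f(z)|^{p}\,D(x,y,z)\,d\sigma(z).
\end{eqnarray*}
Next I would integrate in $x$ against $d\sigma$ and interchange the order of integration by Tonelli's theorem, which is legitimate because the integrand is nonnegative:
\begin{eqnarray*}
\|\tau_y f\|_{p,\sigma}^{p}
\leq \int_{0}^{\infty}\!\!\int_{0}^{\infty} |f(z)|^{p}\,D(x,y,z)\,d\sigma(z)\,d\sigma(x)
= \int_{0}^{\infty} |f(z)|^{p}\left(\int_{0}^{\infty} D(x,y,z)\,d\sigma(x)\right) d\sigma(z).
\end{eqnarray*}
The crucial step is that the inner integral equals $1$: because $D$ is symmetric in its three arguments (as noted right after (\ref{dkp6: eq 1.8})), one has $\int_{0}^{\infty} D(x,y,z)\,d\sigma(x)=\int_{0}^{\infty} D(z,y,x)\,d\sigma(x)$, which is precisely (\ref{dkp6: eq 1.10}) with the roles of $x$ and $z$ interchanged. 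Hence the right-hand side collapses to $\int_{0}^{\infty}|f(z)|^{p}\,d\sigma(z)=\|f\|_{p,\sigma}^{p}$, and taking $p$-th roots yields (\ref{dkp6: eq 1.12}).

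The only delicate points are the justification of Jensen's inequality and the Tonelli interchange, together with the reuse of the normalization; the symmetry of $D$ is exactly what permits me to invoke (\ref{dkp6: eq 1.10}) with $x$ and $z$ swapped, so no fresh computation of the kernel is required. I expect this symmetry-plus-normalization observation to be the real content of the argument, while the convexity estimate is routine. Note finally that for $p=1$ the scheme degenerates to a direct application of Tonelli combined with the triangle inequality under the integral sign, so the single Jensen-based computation covers the entire range $1\le p<\infty$ uniformly.
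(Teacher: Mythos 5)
Your proof is correct, but it cannot be compared against a proof in the paper, because the paper gives none: Lemma \ref{dkp6:lemma 1.1} is merely \emph{recalled} as a known property of the Hankel translation, with the reader referred to the cited literature (Hirschman, Haimo, Pathak--Dixit, Betancor--Rodr\'iguez-Mesa, Upadhyay--Singh). Your argument is, in substance, the classical proof found in those sources: the kernel $D(x,y,z)$ is nonnegative and, by (\ref{dkp6: eq 1.10}), $D(x,y,z)\,d\sigma(z)$ is a probability measure for each fixed $x,y$; Jensen's inequality (or equivalently H\"older against a probability measure) gives the pointwise bound $|\tau_y f(x)|^p \leq \int_0^\infty |f(z)|^p D(x,y,z)\,d\sigma(z)$; Tonelli plus the symmetry of $D$ then lets you integrate out the $x$-variable using (\ref{dkp6: eq 1.10}) again, yielding $\|\tau_y f\|_{p,\sigma}^p \leq \|f\|_{p,\sigma}^p$. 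All the ingredients you invoke (nonnegativity, normalization, symmetry) are stated in the paper's Section 1, so your argument is self-contained at the level of the paper's own preliminaries, which is something the paper's citation-only treatment does not provide. Two very minor remarks: your parenthetical reading of (\ref{dkp6: eq 1.8}) should be phrased as ``$D$ vanishes by convention off the set of triples satisfying the triangle inequality'' (for $\nu<1$ the exponent $2\nu-2$ is negative, so one cannot literally say the area factor vanishes there); and the Jensen step implicitly shows, via the same Tonelli computation, that $\tau_y f(x)$ is well defined for $\sigma$-a.e.\ $x$, which is worth making explicit since $f$ is only assumed to lie in $L^{p,\sigma}$. Neither affects the validity of the argument.
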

\begin{Lemma}
\label{dkp6:lemma 1.5}
	Let $f  \in L^{p,\sigma}(\mathbb{R}^+)$ and $g  \in L^{q,\sigma}(\mathbb{R}^+), \,\, \frac{1}{p}+\frac{1}{q}=1+\frac{1}{r} $. Then we have
	\begin{eqnarray}
	\label{dkp6: eq 1.17}
	||f\#  g||_{r,\sigma} \leq ||f||_{p,\sigma} ||g||_{p,\sigma} .
	\end{eqnarray}	 
\end{Lemma}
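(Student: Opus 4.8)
The plan is to prove the generalized Young inequality for the Hankel convolution by the classical three--exponent Hölder argument, reading the convolution through the Hankel translation $\tau_y$ and using Lemma~\ref{dkp6:lemma 1.1} as the only quantitative input. (I read the right--hand side as $\|f\|_{p,\sigma}\,\|g\|_{q,\sigma}$, which is what the hypotheses on $f,g$ demand.) First I would record two structural facts about the kernel: $D(x,y,z)\ge 0$, which is visible from the explicit formula (\ref{dkp6: eq 1.8}), and the symmetry of the translation, namely $\tau_y f(x)=\tau_x f(y)$, which follows at once from the symmetry of $D$ in its three arguments. Writing $(f\#g)(x)=\int_0^\infty \tau_y f(x)\,g(y)\,d\sigma(y)$ and using $|\tau_y f(x)|\le \tau_y|f|(x)$, the estimate reduces to a pointwise bound on $\int_0^\infty |\tau_y f(x)|\,|g(y)|\,d\sigma(y)$.

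The core step is to split the integrand as
\[
|\tau_y f(x)|\,|g(y)| = \Big(|\tau_y f(x)|^{p}|g(y)|^{q}\Big)^{1/r}\cdot |\tau_y f(x)|^{1-p/r}\cdot |g(y)|^{1-q/r},
\]
and apply Hölder's inequality in the variable $y$ with the three exponents $r$, $s=\tfrac{pr}{r-p}$ and $t=\tfrac{qr}{r-q}$. The relation $\tfrac1p+\tfrac1q=1+\tfrac1r$ is exactly what makes $\tfrac1r+\tfrac1s+\tfrac1t=1$, so this is a legitimate application. The middle factor is $\big(\int_0^\infty |\tau_y f(x)|^{p}\,d\sigma(y)\big)^{1/s}$, which by the symmetry $\tau_y f(x)=\tau_x f(y)$ equals $\|\tau_x f\|_{p,\sigma}^{p/s}\le \|f\|_{p,\sigma}^{p/s}$ via Lemma~\ref{dkp6:lemma 1.1}, while the last factor is $\|g\|_{q,\sigma}^{q/t}$.

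Raising the resulting pointwise bound to the power $r$ and integrating $d\sigma(x)$ leaves the single genuinely double integral
\[
\int_0^\infty\!\!\int_0^\infty |\tau_y f(x)|^{p}\,|g(y)|^{q}\,d\sigma(y)\,d\sigma(x)=\int_0^\infty |g(y)|^{q}\,\|\tau_y f\|_{p,\sigma}^{p}\,d\sigma(y)\le \|f\|_{p,\sigma}^{p}\,\|g\|_{q,\sigma}^{q},
\]
where I use Fubini (justified by positivity) and Lemma~\ref{dkp6:lemma 1.1} once more. Collecting the three contributions and checking the arithmetic identities $p+\tfrac{pr}{s}=r$ and $q+\tfrac{qr}{t}=r$ yields $\|f\#g\|_{r,\sigma}^{r}\le \|f\|_{p,\sigma}^{r}\|g\|_{q,\sigma}^{r}$, which is the claim. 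The main obstacle I anticipate is not any single estimate but the bookkeeping: one must verify the two exponent identities and invoke Lemma~\ref{dkp6:lemma 1.1} in the correct variable each time---once in $y$ (through the symmetry of $\tau$) for the middle Hölder factor, and once in $x$ for the final double integral. A secondary technical point is handling the degenerate cases separately: the endpoint $r=\infty$ (so $q=p'$), where the argument collapses to a single Hölder step, and the cases $p=1$ or $q=1$, where $\tau$ is a contraction and Minkowski's integral inequality suffices directly.
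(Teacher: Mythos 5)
Your proposal cannot be compared against a proof in the paper, because the paper contains none: Lemma~\ref{dkp6:lemma 1.5} is simply \emph{recalled} from the cited literature (Hirschman, Haimo, Betancor--Rodr\'{\i}guez-Mesa, Upadhyay--Singh) as a known property of the Hankel convolution, with no argument given. So what you have supplied is a self-contained proof where the paper offers only a citation, and as such it stands or falls on its own merits. It stands: your three-exponent H\"older argument is the classical proof of Young's inequality, correctly transplanted to the Hankel setting. The structural inputs you isolate are exactly the right ones --- nonnegativity of $D(x,y,z)$ from (\ref{dkp6: eq 1.8}), the symmetry $\tau_y f(x)=\tau_x f(y)$ inherited from the symmetry of $D$, and the contraction property of Lemma~\ref{dkp6:lemma 1.1} --- and the exponent bookkeeping checks out: with $s=\tfrac{pr}{r-p}$, $t=\tfrac{qr}{r-q}$ one has $\tfrac1r+\tfrac1s+\tfrac1t=\tfrac1p+\tfrac1q-\tfrac1r=1$, the middle and last H\"older factors carry exponents $(1-p/r)s=p$ and $(1-q/r)t=q$, and the final identities $p+\tfrac{pr}{s}=r$, $q+\tfrac{qr}{t}=r$ assemble the bound $\|f\#g\|_{r,\sigma}^{r}\le\|f\|_{p,\sigma}^{r}\|g\|_{q,\sigma}^{r}$. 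You were also right to flag, and set aside, the degenerate cases $p=1$, $q=1$, $r=\infty$ (where $s$ or $t$ becomes infinite and the argument degenerates to Minkowski's integral inequality or a single H\"older step), and right to read the conclusion as $\|f\|_{p,\sigma}\|g\|_{q,\sigma}$ --- the $\|g\|_{p,\sigma}$ in the paper's display (\ref{dkp6: eq 1.17}) is a typo, since $g$ is only assumed to lie in $L^{q,\sigma}(\mathbb{R}^+)$. The one point worth making explicit in a final write-up is that the relation $\tfrac1p+\tfrac1q=1+\tfrac1r$ forces $p\le r$ and $q\le r$, so that $s,t\ge 1$ and the generalized H\"older inequality is genuinely applicable.
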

Betancor and Rodriguez-Mesa defined the new function spaces called  Besov-Hankel by using Hankel transform and its properties  (see \cite{bjrm1998}) 
\begin{defn} \textbf{(Besov-Hankel Space):}
	Let measurable function $ \phi$ on $(0, \infty) $ belongs to $BH^{p,q} _{\alpha, \sigma}$ if  $ \phi \in L^{p,\sigma}(\mathbb{R}^+) $ and 
	\begin{eqnarray}
	\label{dkp6: eq 1.21}
	\int\limits_{0}^{\infty} \left( h^{-\alpha} w_{p} (\phi) (h) \right)^q \frac{dh}{h} < \infty \,\,\,\, \text{for}\,\, \alpha > 0, 1 \leq p,q < \infty,
	\end{eqnarray}
	where $ w_{p} (\phi) (h) =: \lVert \tau_h \phi -  \phi\rVert_{p,\sigma} $,\,\, $ h \in (0, \infty) $. 
\end{defn}
\subsection{Bessel Wavelet }
Using the properites of Hankel transform Pathak and Dixit (see \cite{rspathak2003}) define  the continuous Bessel wavelet for  $\psi \in L^{p,\sigma}(\mathbb{R}^+)$ , $1 \leq p < \infty $,  $ b \geq 0 $ and $ a > 0 $ as \\
\begin{eqnarray}
\label{dkp6: eq 1.18}
\psi_{b,a}(x):& =&D_a \tau_b \psi(x) \nonumber \\ &=& a^{-2\nu-1}\int\limits_{0}^{\infty} \psi(z) D \left( \frac{b}{a},\frac{x}{a},z \right)   d \sigma(z)  
\end{eqnarray} 
where $ D_a$ denote the dilation operator. \\
The continuous Bessel wavelet transform of $f \in  L^{2,\sigma}(\mathbb{R}^+) $ with respect to a wavelet $ \psi \in  L^{2,\sigma}(\mathbb{R}^+)$ is defined as  
\begin{eqnarray}
\label{dkp6: eq 1.19}
(B_\psi f )(b,a) & = & \int\limits_{0}^{\infty} f(x) \overline{\psi_{b,a}}(x) d\sigma (x) \nonumber \\ &=& a^{-2\nu-1} \int\limits_{0}^{\infty} \int\limits_{0}^{\infty} f(x) \overline{\psi(z)} D \left( \frac{b}{a},\frac{x}{a},z\right) d\sigma(z) d\sigma(x).
\end{eqnarray}
Moreover , using (\ref{dkp6: eq 1.6}), we have 
\begin{eqnarray}
\label{dkp6: eq 1.20}
(B_\psi f )(b,a) = (f\# \psi_a)(b),
\end{eqnarray}
where $ \psi_a (t)= a^{-2\nu-1} \overline{\psi(t/a)} $.\\

 Betancor and Rodriguez-Mesa \cite{bjrm1998} first time introduce Besov-Hankel spaces and  characterized by mean of  the Bochner-Riesz mean and the partial Hankel integrals. Perrier and Basdevant \cite{peba1996} established the characterization of Besov spaces by means of continuous wavelet transform. Motivated by these two works, we characterized Besov-Hankel spaces by the continuous Bessel wavelet transform.  \\
Present paper is organized in following manner: section 1 is introductory, in which we recall some properties of  Hankel transform , Besov-Hankel space and continuous Bessel wavelet transform. Section 2 is related to continuous the continuous Besse wavelet transform in $ L^{p,\sigma}(\mathbb{R}^+)$ . In the  section 3, characterize Besov-Hankel norms in terms of continuous Bessel wavelet transform.
\section{The Continuous Bessel Wavelet Transform in $ L^{p,\sigma}(\mathbb{R}^+)$ }
In this section we extend the concept of Bessel wavelet transform on $L^{p,\sigma}(\mathbb{R}^+)$.  
\begin{thm}
\label{dkp6:1 2.1}
	Suppose that the Bessel wavelet $ \psi \in L^{p,\sigma}(\mathbb{R}^+)$ satisfies the admissibility condition 
	\begin{eqnarray*}
		A_\psi = \int ^ \infty _0 \omega^{-2\nu -1} | \hat{\psi}(\omega)|^2d\omega >0,
	\end{eqnarray*}
	where $ \hat{\psi}$ denote the Hankel transform of $ \psi$ then continuous Bessel wavelet transform is a bounded linear operator 
	\begin{eqnarray*}
		L^{p,\sigma}(\mathbb{R}^+)\rightarrow  L^{2,\sigma}(\mathbb{R}^+, \frac{d\sigma(a)}{a^{2\nu +1}}) \times L^{p,\sigma}(\mathbb{R}^+),
	\end{eqnarray*}
	moreover, for any $ f\in L^{p,\sigma}(\mathbb{R}^+) $, $1<p<\infty$
	\begin{eqnarray}
	\label{dkp6:eq 2.1}
	\| f\|_{L^{p,\sigma}(\mathbb{R}^+)} = \left(\int ^ \infty _0 \left( \int ^ \infty _0 |B_\psi f(b,a)|^2\frac{d\sigma(a)}{a^{2\nu +1}} \right)^{\frac{p}{2}} d\sigma(b) \right)^\frac{1}{p}. 
	\end{eqnarray}	
\end{thm}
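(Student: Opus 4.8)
The plan is to recognize the right-hand side of \eqref{dkp6:eq 2.1} as a Littlewood--Paley $g$-function built from the family $\psi_a(t)=a^{-2\nu-1}\overline{\psi(t/a)}$ and, via the representation \eqref{dkp6: eq 1.20}, to reduce the statement to a square-function estimate for the Hankel convolution operators $f\mapsto f\#\psi_a$. The first step is to pass to the Hankel transform side: combining the convolution rule \eqref{dkp6: eq 1.11} with the behaviour of the Hankel transform under the dilation $t\mapsto t/a$ (which produces a factor $a^{2\nu+2}$ from the measure $d\sigma$), I would show
\begin{eqnarray*}
\widehat{(B_\psi f)(\cdot,a)}(\omega)=\hat f(\omega)\,\widehat{\psi_a}(\omega)=a\,\hat f(\omega)\,\overline{\hat\psi(a\omega)},
\end{eqnarray*}
so that $|\widehat{(B_\psi f)(\cdot,a)}(\omega)|^2=a^2|\hat f(\omega)|^2|\hat\psi(a\omega)|^2$.

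The case $p=2$ I would settle directly. Fixing $a$ and applying Parseval's formula \eqref{dkp6: eq 1.5} in the variable $b$ turns $\int_0^\infty|B_\psi f(b,a)|^2\,d\sigma(b)$ into $\int_0^\infty a^2|\hat f(\omega)|^2|\hat\psi(a\omega)|^2\,d\sigma(\omega)$. Integrating against $\frac{d\sigma(a)}{a^{2\nu+1}}$ and exchanging the order of integration by Tonelli, the inner $a$-integral, after the substitution $u=a\omega$, factors out as the admissibility constant $A_\psi$, a quantity independent of $\omega$; what remains is $\int_0^\infty|\hat f(\omega)|^2\,d\sigma(\omega)=\|f\|_{2,\sigma}^2$ by the isometry of the Hankel transform. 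This yields \eqref{dkp6:eq 2.1} for $p=2$ (with the admissibility constant absorbed into the normalisation of $\psi$), and simultaneously the $L^2$-boundedness into the first factor of the target space.

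For general $1<p<\infty$ I would introduce the $g$-function
\begin{eqnarray*}
G_\psi f(b)=\left(\int_0^\infty|B_\psi f(b,a)|^2\,\frac{d\sigma(a)}{a^{2\nu+1}}\right)^{1/2},
\end{eqnarray*}
so that the right-hand side of \eqref{dkp6:eq 2.1} is precisely $\|G_\psi f\|_{p,\sigma}$, and prove $\|G_\psi f\|_{p,\sigma}\asymp\|f\|_{p,\sigma}$. The upper bound I would obtain by viewing $f\mapsto\{f\#\psi_a\}_{a>0}$ as a vector-valued Hankel convolution operator and controlling it through the translation estimate of Lemma~\ref{dkp6:lemma 1.1} together with the Young-type inequality for the Hankel convolution (Lemma~\ref{dkp6:lemma 1.5}), combined with a Calder\'on--Zygmund/Littlewood--Paley argument adapted to the kernel $D(x,y,z)$ of \eqref{dkp6: eq 1.8}. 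The lower bound would then follow by duality from the upper bound applied to a reconstructing wavelet, using the $p=2$ reproducing identity to polarise.

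The hard part will be this last step: the vector-valued Hankel singular-integral estimate needed to bound $G_\psi$ on $L^{p,\sigma}$ for $p\neq2$, and in particular showing that the two-sided comparison collapses to the exact equality \eqref{dkp6:eq 2.1} rather than a mere equivalence of norms. This is where the admissibility hypothesis $A_\psi>0$ and the precise algebraic structure of the Hankel translation encoded in \eqref{dkp6: eq 1.9}--\eqref{dkp6: eq 1.10} must enter most delicately; the $p=2$ identity together with interpolation controls one endpoint, but propagating the exact constant across all $p$ is the principal obstacle.
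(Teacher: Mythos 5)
Your proposal follows essentially the same route as the paper's proof: the case $p=2$ via Plancherel/Parseval, the upper bound $\left(\int_0^\infty\left(\int_0^\infty |B_\psi f(b,a)|^2\,\frac{d\sigma(a)}{a^{2\nu+1}}\right)^{p/2}d\sigma(b)\right)^{1/p}\leq C_p\|f\|_{p,\sigma}$ via the vector-valued singular-integral theorem of Stein \cite{ems1970} (with duality to reach $2<p<\infty$), and the reverse inequality by polarizing the $L^{2,\sigma}$ Parseval identity against $g\in L^{q,\sigma}$, applying Schwarz and H\"older, and invoking the upper bound for $g$. The obstacle you flag as the principal difficulty---collapsing the two-sided comparison to the exact equality \eqref{dkp6:eq 2.1}---is a genuine issue, but the paper does not overcome it either: its argument only produces the two inequalities with constants $C_p$ and $A_q/A_\psi$ (already at $p=2$ the constant $\sqrt{A_\psi}$ appears), so what is actually proved there is an equivalence of norms, and the equality sign in the statement of the theorem is an overstatement.
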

\begin{proof}
	Let $ S_p$ denote the space $ L^{2,\sigma} (\mathbb{R}^+, \frac{d\sigma(a)}{ a^{2\nu+1}}  ) \times L^{p,\sigma}(\mathbb{R}^+)$ associated to the norm \\
	\begin{center}
		$ \| f\|_{S_p} = \left\lbrace  \int ^ \infty _0 \left(  \int ^ \infty _0 | f(b,a)|^2 \frac{d\sigma(a)}{ a^{2\nu+1}} \right)^\frac{p}{2} d\sigma(b) \right\rbrace^\frac{1}{p} $.
	\end{center}
	If we take $ p=2$, then from Plancherel's theorem:   
	\begin{eqnarray*}
		\| B_\psi f\|_{S_2} &=& \left\lbrace  \int ^ \infty _0 \left(  \int ^ \infty _0 | B_\psi f(b,a)|^2 \frac{d\sigma(a)}{ a^{2\nu+1}} \right) d\sigma(b) \right\rbrace^\frac{1}{2} \\  \| B_\psi f\|_{S_2} &=& \sqrt{A_\psi} \| f\|_{L_{2,\sigma}}, 
	\end{eqnarray*}
	where $A_\psi = \int ^ \infty _0 \omega^{-2\nu-1} | \hat{\psi}(\omega) |^2 d\omega > 0 $, if $ \psi$ is real. From singular integral theorem, the operators on $ L^{2,\sigma} (\mathbb{R}^+, \frac{d\sigma(a)}{ a^{2\nu+1}}  ) $ holds inequality: \\ 
	\begin{center}
		$ \| B_\psi f\|_{S_p} \leq C_p \|f\|_{L^{p,\sigma}(\mathbb{R}^+)} $ \text{for} $ 1<p \leq 2$,
	\end{center}
	where the constant $ C_p$ depends only on $ p$ and $ \psi$(see \cite{ems1970}). Due to duality the inequality is also valid for $ 1<p<\infty$. It follows that 
\begin{eqnarray}
\label{dkp6:eq 2.2}
	\left\lbrace  \int ^ \infty _0 \left(  \int ^ \infty _0 | B_\psi f(b,a)|^2 \frac{d\sigma(a)}{ a^{2\nu+1}} \right)^\frac{p}{2} d\sigma(b) \right\rbrace^\frac{1}{p} \leq C_p \|f\|_{L^{p,\sigma}(\mathbb{R}^+)}.
\end{eqnarray}
	Conversely suppose that $ f \in L^{2,\sigma}(\mathbb{R}^+) \cap L^{p,\sigma}(\mathbb{R}^+)$. Since continuous Bessel wavelet transform is isomerty for every $ g \in L^{2,\sigma}(\mathbb{R}^+) \cap L^{q,\sigma}(\mathbb{R}^+)$, we can write 
	\begin{eqnarray}
	\label{dkp6:eq 2.3}
	\int ^ \infty _0  \int ^ \infty _0  B_\psi f(b,a) \overline{  B_\psi g(b,a)} a^{-2\nu - 1} d\sigma(a) d\sigma(b) &=& A_\psi \langle f, g\rangle \nonumber \\ \frac{1}{A_\psi} \int ^ \infty _0  \int ^ \infty _0  B_\psi f(b,a) \overline{ B_\psi g(b,a)} a^{-2\nu - 1} d\sigma(a) d\sigma(b) &=&  \int ^ \infty _0 f(x) \overline{g(x)} d \sigma (x).
	\end{eqnarray}
	Now,  
	\begin{eqnarray*}
		|\int ^ \infty _0 f(x) g(x) d \sigma (x)| &=& \frac{1}{A_\psi} | \int ^ \infty _0  \int ^ \infty _0  B_\psi f(b,a) \overline{ B_\psi g(b,a)} a^{-2\nu - 1} d\sigma(a) d\sigma(b)| \\ &\leq& \frac{1}{A_\psi} \int ^ \infty _0  \int ^ \infty _0 |  B_\psi f(b,a) \overline{ B_\psi g(b,a)}| a^{-2\nu - 1} d\sigma(a) d\sigma(b),
	\end{eqnarray*}
	using  Schwarz inequality and then Holder's inequality, we have
	\begin{eqnarray*}
		& \leq & \frac{1}{A_\psi} \left( \int ^ \infty _0 \left( \int ^ \infty _0 |B_\psi f(b,a)|^2 a^{-2\nu - 1} d\sigma(a)\right)^\frac{p}{2} d\sigma(b)  \right)^\frac{1}{p} \\ && \,\,\,\,\,\,\,\,\,\,\,\,\,\,\,\,  \times \left( \int ^ \infty _0 \left( \int ^ \infty _0 |B_\psi g(b,a)|^2 a^{-2\nu - 1} d\sigma(a)\right)^\frac{q}{2} d\sigma(b)  \right)^\frac{1}{q},
	\end{eqnarray*}
	where $ \frac{1}{p} + \frac{1}{q} = 1$.\\ From equation (\ref{dkp6:eq 2.2}), we get
	\begin{eqnarray*}
		& \leq & \frac{A_q}{A_\psi} \left( \int ^ \infty _0 \left( \int ^ \infty _0 |B_\psi f(b,a)|^2 a^{-2\nu - 1} d\sigma(a)\right)^\frac{p}{2} d\sigma(b)  \right)^\frac{1}{p}  \| g\|_{L^{q,\sigma} (\mathbb{R}^+)}.
	\end{eqnarray*}
	By Density theorem 
	\begin{eqnarray*}
		\| f \|_{L^{p,\sigma} (\mathbb{R}^+)}	& \leq & A \left( \int ^ \infty _0 \left( \int ^ \infty _0 |B_\psi f(b,a)|^2 a^{-2\nu - 1} d\sigma(a)\right)^\frac{p}{2} d\sigma(b)  \right)^\frac{1}{p},  
	\end{eqnarray*}
	where $A = \frac{A_q}{A_\psi} $.
\end{proof}
\begin{thm} (\textbf{Parseval's formula})
\label{dkp6:1 2.2}
	Let us assume $  \phi_1 \in L^{p,\sigma}(\mathbb{R^+})$, $ \phi_2 \in L^{q,\sigma}(\mathbb{R^+})$ with $ 1 \leq p,q < \infty $ and $ \frac{1}{p} + \frac{1}{q} = 1$. If $ \psi$ is a real wavelet then
	\begin{eqnarray}
	\label{dkp6:eq 2.4}
	\frac{1}{A_\psi} \int ^ \infty _0  \int ^ \infty _0  B_\psi \phi_1 (b,a) \overline{ B_\psi \phi_2 (b,a)} a^{-2\nu - 1} d\sigma(a) d\sigma(b) &=&  \int ^ \infty _0 \phi_1(x) \overline{\phi_2(x)} d\sigma(x), \nonumber \\ 
	\end{eqnarray}
	where $ A_\psi = \int ^ \infty _0 \omega^{-2\nu-1} | \hat{\psi}(\omega) |^2 d\omega > 0 $ and $  \hat{\psi} $ denote the Hankel transform.
\end{thm}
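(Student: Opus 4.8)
The plan is to obtain the identity first on a dense product subspace, where it reduces to the $L^{2,\sigma}$ Plancherel identity already used in the proof of Theorem~\ref{dkp6:1 2.1}, and then to extend it to all of $L^{p,\sigma}\times L^{q,\sigma}$ by continuity. Concretely, I would begin from the isometry (Plancherel) property of the continuous Bessel wavelet transform on $L^{2,\sigma}(\mathbb{R}^+)$: for real $\psi$ and $f,g\in L^{2,\sigma}(\mathbb{R}^+)$, polarizing the identity $\|B_\psi f\|_{S_2}^2=A_\psi\|f\|_{2,\sigma}^2$ established in the proof of Theorem~\ref{dkp6:1 2.1} gives
\[
\int_0^\infty\!\!\int_0^\infty B_\psi f(b,a)\,\overline{B_\psi g(b,a)}\,a^{-2\nu-1}\,d\sigma(a)\,d\sigma(b)=A_\psi\int_0^\infty f(x)\overline{g(x)}\,d\sigma(x).
\]
This is exactly equation~(\ref{dkp6:eq 2.3}), and it holds in particular for $f\in L^{2,\sigma}\cap L^{p,\sigma}$ and $g\in L^{2,\sigma}\cap L^{q,\sigma}$.

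Next I would introduce the two bilinear forms whose equality is to be proved, namely
\[
B(\phi_1,\phi_2):=\frac{1}{A_\psi}\int_0^\infty\!\!\int_0^\infty B_\psi\phi_1(b,a)\,\overline{B_\psi\phi_2(b,a)}\,a^{-2\nu-1}\,d\sigma(a)\,d\sigma(b)
\]
and $C(\phi_1,\phi_2):=\int_0^\infty\phi_1(x)\overline{\phi_2(x)}\,d\sigma(x)$, and verify that both are bounded on $L^{p,\sigma}\times L^{q,\sigma}$. For $C$ this is immediate from H\"older's inequality, $|C(\phi_1,\phi_2)|\le\|\phi_1\|_{p,\sigma}\|\phi_2\|_{q,\sigma}$. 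For $B$ I would apply the Schwarz inequality in the variable $a$ followed by H\"older's inequality with exponents $p,q$ in the variable $b$, exactly as in the chain of estimates preceding (\ref{dkp6:eq 2.2}), to obtain $|B(\phi_1,\phi_2)|\le\frac{1}{A_\psi}\|B_\psi\phi_1\|_{S_p}\|B_\psi\phi_2\|_{S_q}$; the $S_p$- and $S_q$-boundedness of $B_\psi$ provided by inequality~(\ref{dkp6:eq 2.2}) of Theorem~\ref{dkp6:1 2.1} then yields $|B(\phi_1,\phi_2)|\le\frac{C_pC_q}{A_\psi}\|\phi_1\|_{p,\sigma}\|\phi_2\|_{q,\sigma}$.

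The conclusion would then follow from a standard density argument. Since $L^{2,\sigma}\cap L^{p,\sigma}$ is dense in $L^{p,\sigma}$ and $L^{2,\sigma}\cap L^{q,\sigma}$ is dense in $L^{q,\sigma}$, I would pick sequences $\phi_1^{(n)}\to\phi_1$ in $L^{p,\sigma}$ and $\phi_2^{(n)}\to\phi_2$ in $L^{q,\sigma}$ lying in the respective intersection spaces. On each pair $(\phi_1^{(n)},\phi_2^{(n)})$ the identity $B=C$ holds by (\ref{dkp6:eq 2.3}); writing $B(\phi_1^{(n)},\phi_2^{(n)})-B(\phi_1,\phi_2)=B(\phi_1^{(n)}-\phi_1,\phi_2^{(n)})+B(\phi_1,\phi_2^{(n)}-\phi_2)$ and applying the bound from the previous paragraph to each term, and likewise for $C$, shows both sides converge to their limits as $n\to\infty$. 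Hence $B(\phi_1,\phi_2)=C(\phi_1,\phi_2)$, which is precisely (\ref{dkp6:eq 2.4}).

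I expect the only genuine obstacle to be the boundedness estimate for $B$: one must check that the Schwarz-then-H\"older decomposition over the product measure space $\mathbb{R}^+\times\mathbb{R}^+$ really reproduces the mixed norms $\|\cdot\|_{S_p}\|\cdot\|_{S_q}$, and that the constant $C_q$ is genuinely available in the dual range. Here a useful simplification is that the hypotheses are not as general as stated: $\frac1p+\frac1q=1$ with $p,q<\infty$ forces $1<p,q<\infty$ (since $p=1$ would give $q=\infty$), so the exponents lie squarely in the range $1<p<\infty$ where (\ref{dkp6:eq 2.2}) is valid, and no separate endpoint analysis is needed.
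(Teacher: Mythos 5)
Your proposal is correct and follows essentially the same route as the paper's own proof: bound the bilinear form $\frac{1}{A_\psi}\langle B_\psi\phi_1,B_\psi\phi_2\rangle$ by Schwarz in $a$ then H\"older in $b$ together with Theorem~\ref{dkp6:1 2.1}, observe that it equals $\langle\phi_1,\phi_2\rangle$ on the dense subspaces $L^{2,\sigma}\cap L^{p,\sigma}$ and $L^{2,\sigma}\cap L^{q,\sigma}$ via the $L^{2,\sigma}$ Plancherel identity, and conclude by density. Your write-up is in fact more careful than the paper's at the final step (the explicit bilinear splitting of the limit, and the remark that the hypotheses force $1<p,q<\infty$), but the underlying argument is the same.
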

\begin{proof}
	Let us define bilinear transform $ T : L^{p,\sigma}(\mathbb{R}^+) \times L^{q,\sigma}(\mathbb{R}^+) \rightarrow \mathbb{R}^+ $ by \\   
	\begin{center}
		$ T(\phi_1, \phi_2) = \langle B_\psi \phi_1 (b,a), B_\psi \phi_2 (b,a) \rangle _{(\frac{d\sigma(a)}{a^{2\nu + 1}}d\sigma(b))}$. 
	\end{center} 
	Now, applying Holder's inequality two times we obtain
	\begin{eqnarray*}
		\lvert T(\phi_1, \phi_2) \rvert && = \lvert \langle B_\psi \phi_1 (b,a), B_\psi \phi_2 (b,a) \rangle _{\frac{d\sigma(a)}{a^{2\nu + 1}}, d\sigma(b)} \rvert \\ && \leq \int ^ \infty _0 \left( \int ^ \infty _0 \lvert B_\psi \phi_1 (b,a) \rvert^2 \frac{d\sigma(a)}{a^{2\nu + 1}} \right)^\frac{1}{2} \left( \int ^ \infty _0 \lvert B_\psi \phi_2 (b,a) \rvert^2 d\sigma(b) \right)^\frac{1}{2} \\ && \leq \left( \int ^ \infty _0 \left( \int ^ \infty _0 \lvert B_\psi \phi_1 (b,a) \rvert^2 \frac{d\sigma(a)}{a^{2\nu + 1}} \right)^\frac{p}{2} \right)^\frac{1}{p} \times  \left( \int ^ \infty _0 \left( \int ^ \infty _0 \lvert B_\psi \phi_2 (b,a) \rvert^2 d\sigma(b) \right)^\frac{q}{2}\right)^\frac{1}{q}  
	\end{eqnarray*}
	using Theorem 2.1. we have 
	\begin{eqnarray}
	\label{dkp6:eq 2.5}
	\lvert T(\phi_1, \phi_2) \rvert \leq C \lVert \phi_1 \rVert_{L^{p,\sigma}(\mathbb{R_+})} \lVert \phi_2\rVert_{L^{q,\sigma}(\mathbb{R_+})}.
	\end{eqnarray}
	Moreover for all $ \phi_1 \in L^{2,\sigma}(\mathbb{R}^+)\cap L^{p,\sigma}(\mathbb{R}^+) $ and $ \phi_2 \in L^{2,\sigma}(\mathbb{R}^+)\cap L^{q,\sigma}(\mathbb{R}^+) $ we get
	\begin{eqnarray}
	\label{dkp6:eq 2.6}
	T(\phi_1, \phi_2) = \langle B_\psi \phi_1 (b,a), B_\psi \phi_2 (b,a) \rangle _{\frac{d\sigma(a)}{a^{2\nu + 1}}, d\sigma(b)} = A_\psi \langle \phi_1, \phi_2 \rangle.
	\end{eqnarray}
	From equations (3.5 ), (3.6) and density of spaces $ L^{2,\sigma}(\mathbb{R}^+)\cap L^{p,\sigma}(\mathbb{R}^+) $ in $ L^{p,\sigma}(\mathbb{R}^+) $ gives the result.
\end{proof}	
\subsection{An inversion formula}
\begin{thm}
\label{dkp6:1 2.3}
	Let us consider $\phi \in L^{p,\sigma}(\mathbb{R^+})$ with $ 1 < p < \infty$ and $ \psi$ is a real wavelet. Then
	\begin{eqnarray}
	\label{dkp6:eq 2.7}
	\phi(x) = \frac{1}{A_\psi} \int ^ \infty _0  \int ^ \infty _0 B_\psi \phi (b,a) \psi_{b,a}(x) \frac{d\sigma(a)}{a^{2\nu + 1}} d\sigma(b).
	\end{eqnarray}
	The equality holds in $L^{p,\sigma}(\mathbb{R}^+)$ sense and the integral of right hand side have to be taken in the sense of distributions.
\end{thm}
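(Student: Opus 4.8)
The plan is to prove the identity weakly, by pairing the right-hand side of (\ref{dkp6:eq 2.7}) against an arbitrary test function and reducing the computation to the Parseval formula of Theorem 2.2. Write $\Phi(x)$ for that right-hand side, interpreted as a distribution: for a test function $g$ in the dense subspace $L^{2,\sigma}(\mathbb{R}^+)\cap L^{q,\sigma}(\mathbb{R}^+)$ of $L^{q,\sigma}(\mathbb{R}^+)$, where $\frac{1}{p}+\frac{1}{q}=1$, its action is
\[
\langle \Phi, g\rangle = \frac{1}{A_\psi}\int_0^\infty\int_0^\infty B_\psi\phi(b,a)\left(\int_0^\infty \psi_{b,a}(x)\,\overline{g(x)}\,d\sigma(x)\right)\frac{d\sigma(a)}{a^{2\nu+1}}\,d\sigma(b).
\]
The first step is to justify, via Fubini's theorem, the interchange that brings the $x$-integration inside the $(b,a)$-integral; absolute integrability of the resulting triple integral is controlled by the $S_p$--$S_q$ boundedness of Theorem 2.1 together with H\"older's inequality, exactly as in the estimate (\ref{dkp6:eq 2.5}).

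Second, I would identify the inner $x$-integral. Directly from the definition (\ref{dkp6: eq 1.19}) of the Bessel wavelet transform one has $\overline{B_\psi g(b,a)}=\int_0^\infty \overline{g(x)}\,\psi_{b,a}(x)\,d\sigma(x)$, so the bracketed integral is precisely $\overline{B_\psi g(b,a)}$. Substituting this turns the pairing into
\[
\langle \Phi, g\rangle = \frac{1}{A_\psi}\int_0^\infty\int_0^\infty B_\psi\phi(b,a)\,\overline{B_\psi g(b,a)}\,\frac{d\sigma(a)}{a^{2\nu+1}}\,d\sigma(b),
\]
which is exactly the left-hand side of the Parseval formula (\ref{dkp6:eq 2.4}) with $\phi_1=\phi$ and $\phi_2=g$.

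Third, applying Theorem 2.2 gives $\langle\Phi,g\rangle=\langle\phi,g\rangle$ for every $g$ in the dense class. Both $\Phi$ and $\phi$ define bounded linear functionals on $L^{q,\sigma}(\mathbb{R}^+)$ (for $\phi$ this is H\"older's inequality; for $\Phi$ it is (\ref{dkp6:eq 2.5}) via Theorem 2.1), so by density the equality $\langle\Phi,g\rangle=\langle\phi,g\rangle$ persists for all $g\in L^{q,\sigma}(\mathbb{R}^+)$. Since $\big(L^{p,\sigma}(\mathbb{R}^+)\big)^{\ast}=L^{q,\sigma}(\mathbb{R}^+)$, the duality pairing separates points of $L^{p,\sigma}(\mathbb{R}^+)$, and we conclude $\Phi=\phi$ as elements of $L^{p,\sigma}(\mathbb{R}^+)$; this is precisely the asserted equality in the $L^{p,\sigma}$ sense, with the right-hand integral understood distributionally through $\langle\Phi,\cdot\rangle$.

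I expect the main obstacle to be the rigorous justification of the distributional interpretation of the double integral and of the Fubini interchange in the first step: one must verify absolute convergence of the triple integral before swapping the order of integration, and it is here that the admissibility $A_\psi>0$ of $\psi$ and the $L^p$-boundedness of $B_\psi$ from Theorem 2.1 are indispensable. The reality hypothesis on $\psi$ does not enter the identification of the inner integral, but it is essential for invoking the Parseval formula of Theorem 2.2, whose statement requires $\psi$ to be a real wavelet.
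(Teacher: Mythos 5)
Your proposal is correct and follows exactly the route the paper intends: the paper's entire proof of this theorem is the single sentence ``The proof followed from Theorem 2.2,'' i.e.\ the Parseval formula, which is precisely the ingredient you use. Your write-up simply supplies the details the paper omits --- pairing the right-hand side against $g\in L^{2,\sigma}\cap L^{q,\sigma}$, identifying the inner integral as $\overline{B_\psi g(b,a)}$, invoking Theorem 2.2, and concluding by density and the duality $\bigl(L^{p,\sigma}\bigr)^{\ast}=L^{q,\sigma}$ --- so it is the same argument, made explicit.
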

\begin{proof}
		The proof followed from Theorem 2.2.
\end{proof}	
\section{Characterization of Besov-Hankel Norms}
In present section, By using the above results, we characterize the Besov-Hankel norms associated  the Bessel wavelet transform. 
\begin{thm}
\label{dkp6:1 2.4}
	Let $ f \in B^{p,q}_{\alpha, \sigma} (\mathbb{R}^+) \left(p,q>1 ,  \alpha \neq \mathbb{Z}  \right) $ and analysing wavelet $ \psi $ has $ [\alpha] +1$ cancellations and $ (z^{\alpha - [\alpha]} \psi) \in L^{1,\sigma} (\mathbb{R}^+)$, then the wavelet coefficient of function $ f$ holds following  conditions:  \\ 
	\begin{center}
		if $ q< \infty$, \,\,\,	$   \int ^ \infty _0 \left[ a^{-\alpha} \| B_{\psi} (.,a) \|_{L^{p, \sigma}}  \right]^q \frac{da}{ a} < \infty$ \\
		if $ q = \infty$, \,\,\,	$ a\rightarrow a^{-\alpha} \| B_{\psi} (.,a)  \|_{L^{p, \sigma}} \in L^\infty (\mathbb{R}^+)$.
	\end{center}
	Moreover the function 	$ a\rightarrow a^{-\alpha} \| B_{\psi} (.,a)  \|_{L^{p, \sigma}} \in L^q (\mathbb{R}^+, \frac{da}{ a^{2\nu + 1}})$ and we have: 
\begin{eqnarray}
	\label{dkp6:eq 2.9}
	\Arrowvert a^{-\alpha} \| B_{\psi} (.,a)  \|_{L^{p, \sigma}} \Arrowvert _{L^{q,\sigma} ( \frac{da}{a}) } \leq \|z^{\alpha - [\alpha]} \psi \|^{L_{1,\sigma} } \times \|h^{\alpha - [\alpha]} \omega_p(f,h)\|_{L^{q,\sigma}( \frac{dh}{h})}  
\end{eqnarray}
\end{thm}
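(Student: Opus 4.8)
The plan is to reduce the wavelet coefficient to a weighted average of the modulus of continuity $\omega_p(f,\cdot)=\lVert\tau_h f-f\rVert_{p,\sigma}$ and then to separate the $b$-, $a$- and $z$-integrations by two applications of Minkowski's integral inequality. First I would invoke the convolution representation (\ref{dkp6: eq 1.20}), writing $B_\psi f(b,a)=(f\#\psi_a)(b)=\int_0^\infty \tau_y f(b)\,\psi_a(y)\,d\sigma(y)$ with $\psi_a(t)=a^{-2\nu-1}\overline{\psi(t/a)}$. Because $\psi$ carries $[\alpha]+1$ cancellations, the low-order Hankel moments of $\psi_a$ vanish, so I may subtract from $\tau_y f(b)$ its Hankel--Taylor correction of order $[\alpha]$ about the origin without altering the integral; after this subtraction the integrand is controlled by the translation difference $\tau_y f(b)-f(b)$, whose $L^{p,\sigma}$-norm in $b$ is exactly $\omega_p(f,y)$.

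Next I would apply Minkowski's integral inequality in the variable $b$ to this representation, which, together with the contraction property of the Hankel translation in Lemma~\ref{dkp6:lemma 1.1}, yields
\begin{eqnarray*}
\lVert B_\psi f(\cdot,a)\rVert_{p,\sigma}\ \le\ \int_0^\infty \omega_p(f,y)\,\lvert\psi_a(y)\rvert\,d\sigma(y).
\end{eqnarray*}
I would then rescale by $y=az$ and use the homogeneity of the measure $d\sigma$ (the density $t^{2\nu+1}$ absorbs the dilation up to a single explicit power of $a$) to recast the right-hand side purely in terms of $\lvert\psi(z)\rvert$ and $\omega_p(f,az)$, which exhibits the scale $a$ cleanly and prepares the outer $L^q$-integration.

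Finally I would multiply by $a^{-\alpha}$, take the $L^{q}(da/a)$-norm, and apply Minkowski's integral inequality a second time to pull the $z$-integral outside the $a$-norm. Evaluating the resulting inner norm through the substitution $h=az$ factors $\big(\int_0^\infty(\cdot)^q\,da/a\big)^{1/q}$ into a pure power of $z$ times $\lVert h^{\alpha-[\alpha]}\omega_p(f,h)\rVert_{L^{q,\sigma}(dh/h)}$; the leftover $z$-power recombines with $\lvert\psi(z)\rvert$ and integrates against $d\sigma$ to produce $\lVert z^{\alpha-[\alpha]}\psi\rVert_{L^{1,\sigma}}$. This gives precisely the product estimate (\ref{dkp6:eq 2.9}). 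The two stated integrability conclusions (for $q<\infty$ and for $q=\infty$) then follow at once, since the right-hand side is finite exactly because $f\in B^{p,q}_{\alpha,\sigma}(\mathbb{R}^+)$.

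The step I expect to be the genuine obstacle is the first one: arranging the cancellations so that only the fractional part $\alpha-[\alpha]$ of the smoothness survives in the weight. This requires a clean Hankel--Taylor expansion of $\tau_y f(b)$ to order $[\alpha]$ with an $L^{p,\sigma}$-controlled remainder, together with the verification that the hypothesis $(z^{\alpha-[\alpha]}\psi)\in L^{1,\sigma}(\mathbb{R}^+)$ is exactly what forces absolute convergence of the weighted $z$-integral, the cancellations supplying the needed vanishing near $z=0$ and the weighted integrability of $\psi$ controlling the behaviour near $z=\infty$. Once this integrability and the admissibility/realness of $\psi$ are secured, justifying the two interchanges of integration in Minkowski's inequality and the change of variables $h=az$ is routine.
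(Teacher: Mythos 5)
Your core computation coincides with the paper's for the base case $0<\alpha<1$: you use the convolution form of $B_\psi f$, exploit the first cancellation $\int_0^\infty \psi\,d\sigma=0$ to replace $\tau_{az}f(b)$ by the difference $\tau_{az}f(b)-f(b)$ at no cost, apply Minkowski's integral inequality in $b$ to get $\lVert B_\psi f(\cdot,a)\rVert_{p,\sigma}\le\int_0^\infty \omega_p(f,az)\,\lvert\psi(z)\rvert\,d\sigma(z)$, then apply Minkowski a second time inside the $L^q(da/a)$-norm and substitute $h=az$ to factor the bound into a weighted $L^{1,\sigma}$-norm of $\psi$ times the Besov--Hankel seminorm of $f$. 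This is exactly the paper's argument and it is sound as far as it goes.

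The gap is your treatment of $[\alpha]\ge 1$. You claim that the $[\alpha]+1$ cancellations let you subtract from $\tau_y f(b)$ a Hankel--Taylor correction of order $[\alpha]$ and that the remainder is then controlled by the first difference $\tau_y f(b)-f(b)$. That is false as stated: subtracting an order-$[\alpha]$ Taylor polynomial leaves a remainder controlled by differences or derivatives of order $[\alpha]+1$, not by the first difference. Worse, any reduction of the whole theorem to the first modulus $\omega_p(f,h)$ is doomed for $\alpha>1$, since $\omega_p(f,h)$ decays at best like $O(h)$, so $\lVert h^{-\alpha}\omega_p(f,h)\rVert_{L^q(dh/h)}$ is infinite for every nontrivial $f$ once $\alpha>1$, and your final estimate would be vacuous. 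The paper avoids this precisely by \emph{not} staying with $f$: for $1<\alpha<2$ it reads the hypothesis through the derivative ($f'$ belongs to the Besov--Hankel space of order $\alpha-1$), notes that the antiderivative of $\psi$ is again an admissible analysing wavelet because $\psi$ has cancellations up to order $2$, applies the already-proved case $0<\alpha<1$ to the coefficients of $f'$ against that integrated wavelet, and then runs a recurrence on $[\alpha]$, each step trading one cancellation of $\psi$ for one derivative of $f$. You correctly identified this step as the genuine obstacle, but the mechanism you propose for it (a Hankel--Taylor expansion with remainder bounded by the first difference) does not close it; the induction on $[\alpha]$ with derivatives of $f$ and antiderivatives of $\psi$ is what is actually needed.
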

\begin{proof}
	By the definition of continuous Bessel wavelet transform, we have 
	\begin{eqnarray}
	B_{\psi} (b,a) &=&  \int ^ \infty _0 f(x) \overline{\psi_{b,a}(x)}d\sigma(x) \nonumber \\ &=&  \int ^ \infty _0 f(x)  \left( \int ^ \infty _0 a^{-2\nu -1} D\left( \frac{b}{a}, \frac{x}{a}, z\right) \overline{\psi(z)}d\sigma(z) \right)    d\sigma(x) \nonumber \\ &=& \int ^ \infty _0 \overline{\psi(z)} \left( \int ^ \infty _0 a^{-2\nu -1} D\left( \frac{b}{a}, \frac{x}{a}, z\right)  f(x) d\sigma(x)  \right) d\sigma(z) \nonumber \\ &=& 
\int ^ \infty _0 \overline{\psi(z)} \left( \int ^ \infty _0  D\left(b, x, az\right) f(x) d\sigma(x) \right) d\sigma(z) \nonumber \\  &=&  \left\lbrace \int ^ \infty _0 (\tau_{az}f)(b) \overline{\psi(z)} d\sigma(z) - \int ^ \infty _0 f(b) \overline{\psi(z)} d\sigma(z)  \right\rbrace  \nonumber \\  &=&  \int ^ \infty _0  \overline{\psi(z)}   \left(  (\tau_{az}f)(b) -  f(b) \right)  d\sigma(z). \nonumber 
	\end{eqnarray}
	Taking $ L^{p, \sigma} -$ norm of the wavelet coefficient 
	\begin{eqnarray}
	\| B_{\psi} (b,a) \|_{L^{p, \sigma}} = \int ^ \infty _0  \left\lbrace  |\int ^ \infty _0  \overline{\psi(z)}   \left(  (\tau_{az}f)(b) -  f(b) \right)  d\sigma(z)|^p \right\rbrace ^{\frac{1}{p}} d\sigma(b). \nonumber 
	\end{eqnarray}
	Using Minkowski inequality of integrability for $ p \neq \infty$
	\begin{eqnarray}
\label{dkp6:eq 2.10}
	\| B_{\psi} (b,a) \|_{L^{p, \sigma}} \leq \int ^ \infty _0  \left\lbrace  \int ^ \infty _0    |(\tau_{az}f)(b) -  f(b) | d\sigma(b)|^p \right\rbrace ^{\frac{1}{p}} |\psi(z)|  d\sigma(z).
	\end{eqnarray}
	Suppose that $ q < \infty$ and integrating w.r.t. $  a $, we get
	\begin{eqnarray}
	\int ^ \infty _0 \left[ a^{-\alpha} \| B_{\psi} (b,a) \|_{L^{p, \sigma}} \right]^q \frac{da}{a} \leq \int ^ \infty _0 \left[ a^{-\alpha} \int ^ \infty _0 |\psi(z)| \omega _{p} (f, az)  d\sigma(z) \right]^q \frac{da}{a}.  \nonumber 
	\end{eqnarray}
	Again using Minkowski integrabilty  inequality 
	\begin{eqnarray*}
		\int ^ \infty _0 \left[ a^{-\alpha} \| B_{\psi} (b,a) \|_{L^{p, \sigma}}  \right]^q \frac{da}{a} &\leq &\left[ \int ^ \infty _0 |\psi(z)| d\sigma(z) \left\lbrace \int ^ \infty _0 \left(  a^{-\alpha}  \omega _{p} (f, az)  \right) ^q \frac{da}{a} \right\rbrace ^{\frac{1}{q}}\right] ^q.  
	\end{eqnarray*}
	Applying change of variable $ h= az $
	\begin{eqnarray}
	\label{dkp6:eq 2.11}
	  & = &\left[ \int ^ \infty _0 z^{-\alpha} |\psi(z)| d\sigma(z) \left\lbrace \int ^ \infty _0 \left(  h^{-\alpha}  \omega _{p,\sigma} (f, h)  \right) ^q \frac{dh}{h}\right\rbrace ^{\frac{1}{q}}\right] ^q \nonumber \\  & \leq & \left\lbrace  \int ^ \infty _0 z^{-\alpha} |\psi(z)| d\sigma(z) \right\rbrace ^q \times \left\lbrace \int ^ \infty _0 \left(  h^{-\alpha}  \omega _{p,\sigma} (f, h)  \right) ^q \frac{dh}{h}\right\rbrace \nonumber \\  & = & \left\lbrace  \int ^ \infty _0  | z^{-\alpha} \psi(z)| d\sigma(z) \right\rbrace ^q \times \left\lbrace \int ^ \infty _0 \left(  h^{-\alpha}  \omega _{p,\sigma} (f, h)  \right) ^q \frac{dh}{h}\right\rbrace \nonumber \\  & < & \infty.
	\end{eqnarray}
	If $ q = \infty$ the hypothesis on $ f$ says that $ h^{-\alpha}  \omega _{p,\sigma} (f, h)  \in {L_{\infty,\sigma}(\mathbb{R}^+)} $, so 
	\begin{eqnarray}
	\label{dkp6:eq 2.12}
	\| B_{\psi} (b,a) \|_{L_{p, \sigma}} \leq a^\alpha  \|h^{-\alpha}  \omega _{p,\sigma} (f, h)\| _{L_{\infty,\sigma}(\mathbb{R}^+)} \int ^ \infty _0  | z^{-\alpha} \psi(z)| d\sigma(z).
	\end{eqnarray}
	The theorem has been proved for $ 0 < \alpha < 1 $. For $ 1 < \alpha < 2$, $ \alpha $ is not an integer, by hypothesis $ f'$ belongs to $ BH^{p}_{\alpha -1,\sigma}$. Since $ \psi $ has cancellation up to order 2 therefore ,$ \left( z - \int ^z  _{-\infty}   \psi(t) d\sigma(t) \right) $ is an analysing wavelet and we can apply inequality (\ref{dkp6:eq 2.11}) to the coefficient $ C\left( f', \int \psi, a, b\right) $
\begin{eqnarray}
	\int ^ \infty _0 \left( \| C\left( f', \int \psi, a, . \right) \|_{L_{\infty,\sigma}(\mathbb{R}^+)} \right)^q \frac{da}{a} \leq  \left\lbrace  \int ^ \infty _0  | z^{\alpha-1} \psi(z)| d\sigma(z) \right\rbrace ^q \nonumber \\ \times \left\lbrace \int ^ \infty _0 \left(  h^{-\alpha-1}  \omega _{p,\sigma} (f', h)  \right) ^q \frac{dh}{h}\right\rbrace. \nonumber
\end{eqnarray}
	Now, 
	\begin{eqnarray}
	\int ^ \infty _0 \left( a^{-\alpha } \| B_{\psi} (b,a) \|_{L_{p, \sigma}} \|\right)^q \frac{d\sigma(a)}{ a^{2\nu}} \leq  \left\lbrace  \int ^ \infty _0  | z^{\alpha-1} \psi(z)| d\sigma(z) \right\rbrace ^q \nonumber \\ \,\,\,\,\,\,\,\,\,\,\, \times \left\lbrace \int ^ \infty _0 \left(  h^{-\alpha-1}  \omega _{p,\sigma} (f', h)  \right) ^q \frac{dh}{h}\right\rbrace < \infty. \nonumber
	\end{eqnarray}
	This prove the result for $ 1 < \alpha < 2$. Similarly for $ q= \infty$. Hence by recurrence on $ [\alpha]$ proved the theorem. 
\end{proof}
 Next theorem is the converse of the above theorem. The Bessel wavelet coefficients at small value of a is sufficient to  characterizes Besov-Hankel spaces.
\begin{thm}
\label{dkp6:1 2.5}
	Suppose $ \alpha > 0$, $ \alpha$  not an integer and a function $ \psi $ is a real $ C^{[\alpha]+1}$- regular analysing wavelet with all derivatives rapidly decreasing. If $ f, f', f'', f''', ..., f^[\alpha] \in  L^{p, \sigma}(\mathbb{R^+})$ $(1 < p < \infty)$, and if  $ a^{-\alpha} \| (B_\psi f)(a,\cdot) \| _{L ^{p,\sigma}} \in L^{q, \sigma}(\mathbb{R}^+, \frac{da}{a}) $, then $ f \in  BH^{p,q}_{\alpha, \sigma}$ and we have
	\begin{eqnarray}
	\label{dkp6:eq 2.13}
	\lVert h^{-(\alpha - [\alpha] } w_{p,\sigma} (f^{([\alpha])}, h)\rVert_{L^{q,\sigma}} && \leq \frac{1}{A_\psi} \left( \frac{2}{(\alpha - [\alpha] )} \Arrowvert \psi^{[\alpha]} \Arrowvert_{L^{1,\sigma}} + \frac{1}{1-(\alpha - [\alpha] )} \Arrowvert \psi^{[\alpha] + 1} \Arrowvert_{L^{1,\sigma}} \right) \nonumber \\ && \times a^{-\alpha} \| (B_\psi f)(a,\cdot) \| _{L ^{p,\sigma}}
	\end{eqnarray}
\end{thm}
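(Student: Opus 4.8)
The plan is to reverse the argument of Theorem~\ref{dkp6:1 2.4}, using the inversion formula of Theorem~\ref{dkp6:1 2.3} to reconstruct $f$ and then to estimate the modulus of continuity of $f^{([\alpha])}$ directly. Write $m=[\alpha]$ and $\beta=\alpha-m\in(0,1)$. Since the hypotheses guarantee $f,f',\dots,f^{(m)}\in L^{p,\sigma}(\mathbb{R}^+)$ and $\psi$ carries $[\alpha]+1$ cancellations, I would first differentiate the inversion formula $m$ times under the integral sign to obtain
\[
f^{(m)}(x)=\frac{1}{A_\psi}\int_0^\infty\int_0^\infty (B_\psi f)(b,a)\,\partial_x^m\psi_{b,a}(x)\,\frac{d\sigma(a)}{a^{2\nu+1}}\,d\sigma(b),
\]
where, by the dilation structure in (\ref{dkp6: eq 1.18}), $\partial_x^m\psi_{b,a}=a^{-m}(\psi^{([\alpha])})_{b,a}$ up to the usual scaling factor. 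It is worth noting that $\tfrac{d\sigma(a)}{a^{2\nu+1}}$ is, up to a constant, the Haar measure $\tfrac{da}{a}$ of the multiplicative group $(\mathbb{R}^+,\cdot)$, which is what will let the final step be read as a convolution.

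Next I would apply $\tau_h-I$ to this identity, take the $L^{p,\sigma}$ norm in $x$, and push it through the two integrals by Minkowski's integral inequality together with the contraction bound of Lemma~\ref{dkp6:lemma 1.1}. This yields
\[
w_{p,\sigma}(f^{(m)},h)\leq\frac{1}{A_\psi}\int_0^\infty \|(B_\psi f)(\cdot,a)\|_{L^{p,\sigma}}\,a^{-m}\,w_{p,\sigma}\!\big((\psi^{([\alpha])})_{b,a},h\big)\,\frac{d\sigma(a)}{a^{2\nu+1}},
\]
so that everything is reduced to estimating the modulus of continuity at lag $h$ of the rescaled wavelet derivative $(\psi^{([\alpha])})_{b,a}$.

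The heart of the proof is this last estimate, which I would obtain by splitting the scale integral at $a=h$. For the small scales $a\leq h$ I would keep only the triangle inequality, $w_{p,\sigma}((\psi^{([\alpha])})_{b,a},h)\leq 2\|(\psi^{([\alpha])})_{b,a}\|_{L^{p,\sigma}}$, whose rescaled $L^{1,\sigma}$-normalization contributes the factor $2\|\psi^{([\alpha])}\|_{L^{1,\sigma}}$; for the large scales $a\geq h$ I would use the $C^{[\alpha]+1}$-regularity and rapid decay of $\psi$ through a mean-value estimate $w_{p,\sigma}((\psi^{([\alpha])})_{b,a},h)\lesssim (h/a)\,\|\psi^{([\alpha]+1)}\|_{L^{1,\sigma}}$. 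Carrying out the bookkeeping of the powers of $a$ and of the weight $a^{-\alpha}$, the small-scale integral converges at $a\to0$ precisely because $\beta>0$ and produces $\tfrac{2}{\beta}\|\psi^{([\alpha])}\|_{L^{1,\sigma}}$, while the large-scale integral converges at $a\to\infty$ because $1-\beta>0$ and produces $\tfrac{1}{1-\beta}\|\psi^{([\alpha]+1)}\|_{L^{1,\sigma}}$; these are exactly the two constants of (\ref{dkp6:eq 2.13}).

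Finally I would take the $L^{q,\sigma}(dh/h)$ norm of $h^{-\beta}w_{p,\sigma}(f^{(m)},h)$. Since the bound of the previous step writes $w_{p,\sigma}(f^{(m)},h)$ as an integral in $a$ of $\|(B_\psi f)(\cdot,a)\|_{L^{p,\sigma}}$ against a kernel depending only on the ratio $h/a$, this is a convolution on $(\mathbb{R}^+,dh/h)$, and an application of Young's inequality (equivalently Minkowski's integral inequality) there transfers the $L^{q,\sigma}(da/a)$ norm from $a^{-\alpha}\|(B_\psi f)(\cdot,a)\|_{L^{p,\sigma}}$ to the left-hand side, giving (\ref{dkp6:eq 2.13}). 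The step I expect to be the main obstacle is the large-scale mean-value estimate: here \emph{translation} is the Hankel translation $\tau_h$ of (\ref{dkp6: eq 1.7}) and the operative smoothness is that of the Bessel differential structure rather than ordinary differentiation, so bounding $(\tau_h-I)(\psi^{([\alpha])})_{b,a}$ by $(h/a)$ times an $L^{1,\sigma}$ norm of a derivative of $\psi$ requires a careful analysis of the kernel $D(x,y,z)$ in (\ref{dkp6: eq 1.8}) and the full force of the rapid-decay hypotheses on $\psi$.
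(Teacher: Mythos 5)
Your proposal is correct and follows essentially the same route as the paper's proof: reconstruct via the inversion formula, apply $\tau_h - I$, push the $L^{p,\sigma}$ norm through by Minkowski/Young so that $w_{p,\sigma}$ is controlled by a multiplicative convolution of $\|(B_\psi f)(\cdot,a)\|_{L^{p,\sigma}}$ against a kernel depending only on $h/a$, bound that kernel by $2\|\psi^{([\alpha])}\|_{L^{1,\sigma}}$ at small scales and by $(h/a)\,\|\psi^{([\alpha]+1)}\|_{L^{1,\sigma}}$ at large scales (yielding exactly the constants $\tfrac{2}{\alpha-[\alpha]}$ and $\tfrac{1}{1-(\alpha-[\alpha])}$), and finish with Young/Minkowski on $(\mathbb{R}^+,\tfrac{da}{a})$. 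The only organizational difference is that you differentiate the inversion formula $[\alpha]$ times at the outset to work directly with $f^{([\alpha])}$, whereas the paper proves the case $0<\alpha<1$ first and then passes from $f$ to $f'$ and $\psi$ to $\psi'$ by recurrence on $[\alpha]$ --- which is the same step iterated one derivative at a time.
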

\begin{proof}
	Let $ f\in L^{p,\sigma}(\mathbb{R}^+)$. By inversion formula of Bessel wavelet transform
	\begin{eqnarray}
\label{dkp6:eq 2.14}
	f(x) = \frac{1}{A_\psi} \int_{0}^{\infty} \frac{d \sigma(a)}{a^{2\nu +1}} \int_{0}^{\infty} (B_\psi f)(a,b) \psi_{a,b} (x) d \sigma(b) 
	\end{eqnarray}
	and
	\begin{eqnarray}
\label{dkp6:eq 2.15}
	\tau_h f(x) = \frac{1}{A_\psi} \int_{0}^{\infty} \frac{d \sigma(a)}{a^{2\nu +1}} \int_{0}^{\infty} (B_\psi f)(a,b) \tau_h \psi_{a,b} (x) d \sigma(b). 
	\end{eqnarray}
	Then 
	\begin{eqnarray*}
		\tau_h f(x) - f(x) &=& \frac{1}{A_\psi} \int_{0}^{\infty} \frac{d \sigma(a)}{a^{2\nu +1}} \int_{0}^{\infty} (B_\psi f)(a,b) \left\lbrace  \tau_h \psi_{a,b}(x) - \psi_{a,b}(x) \right\rbrace d \sigma(b) \\ &=& \frac{1}{A_\psi} \int_{0}^{\infty} \frac{d \sigma(a)}{a^{2\nu +1}} \int_{0}^{\infty} (B_\psi f)(a,b) \left\lbrace  \tau_\frac{h}{a} \tau_\frac{b}{a} \psi (\frac{x}{a}) - \tau_\frac{b}{a} \psi (\frac{x}{a}) \right\rbrace d \sigma(b)  \\ &=& \frac{1}{A_\psi} \int_{0}^{\infty} \frac{d \sigma(a)}{a^{2\nu +1}} \int_{0}^{\infty} (B_\psi f)(a,b) a^{-2\nu -1} D(\frac{b}{a},\frac{x}{a}, y) d \sigma(b) \\ && \,\,\,\,\,\,\,\,\,\,\,\,\,\,\,\,\,\,\,\,\,\,\,\,\,\, \times \int_{0}^{\infty} \left\lbrace  \tau_\frac{h}{a}  \psi (y) -  \psi (y) \right\rbrace d\sigma(y)  \\ &=& \frac{1}{A_\psi} \int_{0}^{\infty} \frac{d \sigma(a)}{a^{2\nu +1}}  \tau_\frac{y}{a} (B_\psi f)(a,\frac{x}{a})   \int_{0}^{\infty} \left\lbrace  \tau_\frac{h}{a}  \psi (y) -  \psi (y) \right\rbrace d\sigma(y) 
	\end{eqnarray*}	
	Taking $L ^{p,\sigma}$- norm on both side, we have
	\begin{eqnarray*}
		w_{p,\sigma} (f,h) &=& \frac{1}{A_\psi} \left\lbrace  \int_{0}^{\infty} \arrowvert \int_{0}^{\infty} \frac{d \sigma(a)}{a^{2\nu +1}}  \tau_\frac{y}{a} (B_\psi f)(a,\frac{x}{a})   \int_{0}^{\infty} \left\lbrace  \tau_\frac{h}{a}  \psi (y) -  \psi (y) \right\rbrace d\sigma(y) \arrowvert ^p d\sigma(x) \right\rbrace ^\frac{1}{p} 
	\end{eqnarray*}	
	applying Minkowski's inequality	
	\begin{eqnarray*}
		& \leq & \frac{1}{A_\psi} \int_{0}^{\infty} \frac{d \sigma(a)}{a^{2\nu +1}}    \int_{0}^{\infty}  \arrowvert \tau_\frac{h}{a}  \psi (y) -  \psi (y)  \arrowvert d\sigma(y) \left\lbrace \int_{0}^{\infty} \lvert \tau_\frac{y}{a} (B_\psi f)(a,\frac{x}{a}) \rvert ^p  d\sigma(x) \right\rbrace ^\frac{1}{p} 
	\end{eqnarray*}
	\begin{eqnarray*}
		& = & \frac{1}{A_\psi} \int_{0}^{\infty} \frac{d \sigma(t)}{t^{2\nu}}  \| (B_\psi f)(\frac{h}{t},\cdot) \| _{L ^{p,\sigma}} \int_{0}^{\infty}  \arrowvert \tau_\frac{h}{a}  \psi (y) -  \psi (y)  \arrowvert d\sigma(y) 
	\end{eqnarray*}
	Now,consider $ 0 < \alpha < 1$ and using Minkowski's inequality	
	\begin{eqnarray*}
		\left\lbrace \int_{0}^{\infty} \frac{dh}{h} h^{-\alpha q} w_{p,\sigma} (f,h)^q \right\rbrace ^\frac{1}{q} 	& \leq &\frac{1}{A_\psi} \int_{0}^{\infty} \frac{d \sigma(t)}{t^{2\nu}}  \int_{0}^{\infty}  \arrowvert \tau_\frac{h}{a}  \psi (y) -  \psi (y)  \arrowvert d\sigma(y)\\ && \times  \left\lbrace \int_{0}^{\infty} \frac{dh}{h} h^{-\alpha q} \| (B_\psi f)(\frac{h}{t},\cdot) \| _{L ^{p,\sigma}}^q \right\rbrace ^\frac{1}{q} 
	\\ 	& =& \frac{1}{A_\psi} \int_{0}^{\infty} \frac{d \sigma(t)}{t^{2\nu+\alpha}}  \int_{0}^{\infty}  \arrowvert \tau_\frac{h}{a}  \psi (y) -  \psi (y)  \arrowvert d\sigma(y)
	\\ & & \times \left\lbrace \int_{0}^{\infty} \frac{da}{a} a^{-\alpha q} \| (B_\psi f)(a,\cdot) \| _{L ^{p,\sigma}}^q \right\rbrace ^\frac{1}{q}  
	\\ 	& =& \frac{C}{A_\psi} \int_{0}^{\infty} \frac{dt}{t^{1+\alpha}}  \int_{0}^{\infty}  \arrowvert \tau_\frac{h}{a}  \psi (y) -  \psi (y)  \arrowvert d\sigma(y) 
	 \\ && \times \left\lbrace \int_{0}^{\infty} \frac{da}{a} a^{-\alpha q} \| (B_\psi f)(a,\cdot) \| _{L ^{p,\sigma}}^q \right\rbrace ^\frac{1}{q}.  
	\end{eqnarray*}
	Using Lemma \ref{dkp6:lemma 1.1}, we obtain
	\begin{eqnarray}
\label{dkp6:eq 2.16}
	\int_{0}^{\infty}  \arrowvert \tau_\frac{h}{a}  \psi (y) -  \psi (y)  \arrowvert d\sigma(y) \leq 2\Arrowvert \psi\Arrowvert_{L^{1,\sigma}}
	\end{eqnarray}
	and
	\begin{eqnarray}
	\label{dkp6:eq 2.17}
	\int_{0}^{\infty}  \arrowvert \tau_\frac{h}{a}  \psi (y) -  \psi (y)  \arrowvert d\sigma(y) 
	&=& \int_{0}^{\infty} \arrowvert \int_{0}^{t} \left( \tau_z \psi(y)\right)'  dz\arrowvert  d\sigma(y) \nonumber \\ &\leq& \int_{0}^{t} \int_{0}^{\infty} \arrowvert\left( \tau_z \psi(y)\right)'  dz\arrowvert  d\sigma(y) \nonumber \\ &\leq&  \lVert \psi'\rVert_{L^{1,\sigma}} t.
	\end{eqnarray}
	Here, we observe that
	\begin{eqnarray*}
		\int_{0}^{\infty} \frac{dt}{t^{1+\alpha}}  \int_{0}^{\infty}  \arrowvert \tau_\frac{h}{a}  \psi (y) -  \psi (y)  \arrowvert d\sigma(y) && = \int_{0}^{1} \frac{dt}{t^{1+\alpha}}  \int_{0}^{\infty}  \arrowvert \tau_\frac{h}{a}  \psi (y) -  \psi (y)  \arrowvert d\sigma(y) \\ && + \int_{1}^{\infty} \frac{dt}{t^{1+\alpha}}  \int_{0}^{\infty}  \arrowvert \tau_\frac{h}{a}  \psi (y) -  \psi (y)  \arrowvert d\sigma(y) \\
	&& \leq 2\Arrowvert \psi\Arrowvert_{L^{1,\sigma}} \int_{1}^{\infty} \frac{dt}{t^{1+\alpha}}  + 2\lVert \psi'\rVert_{L^{1,\sigma}} \int_{0}^{1} \frac{dt}{t^{\alpha}} \\ && = \frac{2}{\alpha} \Arrowvert \psi\Arrowvert_{L^{1,\sigma}} + \frac{1}{1-\alpha} \Arrowvert \psi' \Arrowvert_{L^{1,\sigma}},
	\end{eqnarray*}
	which proved the result for $ 0 < \alpha < 1$.\\ If $ 1 < \alpha < 2$, by the hypothesis $ f' \in L^p(\mathbb{R^+})$ and $ \psi$ is a $C^1$- regular function with $ \psi' $ rapidly decreasing at infinity. From equations (\ref{dkp6:eq 2.14}) and (\ref{dkp6:eq 2.15}),we have the equality
	\begin{eqnarray*}
		\tau_h f'(x) - f'(x) &=& \frac{1}{A_\psi} \int_{0}^{\infty} \frac{d \sigma(a)}{a^{2\nu +1}} \int_{0}^{\infty} (B_\psi f)(a,b) \left\lbrace  \tau_h \psi'_{a,b}(x) - \psi'_{a,b}(x) \right\rbrace d \sigma(b)
	\end{eqnarray*}
	Calculate in sinmilar manner as above for $ f'$ gives the following estimation
	\begin{eqnarray*}
		\left\lbrace \int_{0}^{\infty} \frac{dh}{h} h^{-(\alpha -1 )q} w_{p,\sigma} (f,h)^q \right\rbrace ^\frac{1}{q} & \leq & \frac{1}{A_\psi} \int_{0}^{\infty} \frac{dt}{t^{1+\alpha}}  \int_{0}^{\infty}  \arrowvert \tau_\frac{h}{a}  \psi (y) -  \psi (y)  \arrowvert d\sigma(y) \\ && \times \left\lbrace \int_{0}^{\infty} \frac{dh}{h} (a)^{-\alpha q} \| (B_\psi f)(a,\cdot) \| _{L ^{p,\sigma}}^q \right\rbrace ^\frac{1}{q} \\ & \leq & \frac{1}{A_\psi} \left( \frac{2}{(\alpha-1)} \Arrowvert \psi' \Arrowvert_{L^{1,\sigma}} + \frac{1}{1-(\alpha -1)} \Arrowvert \psi'' \Arrowvert_{L^{1,\sigma}} \right) \\ && \times \left\lbrace \int_{0}^{\infty} \frac{dh}{h} a ^{-\alpha q} \| (B_\psi f)(a,\cdot) \| _{L ^{p,\sigma}}^q \right\rbrace ^\frac{1}{q}
	\end{eqnarray*}
	this proves that $ f' \in BH^{p,q}_{\alpha-1, \sigma}$ the hypothesis $ a \rightarrow a^{-\alpha} \| (B_\psi f)(a,\cdot) \| _{L ^{p,\sigma}}^q  \in L^{q,\sigma}(\mathbb{R^+})$ implies  $ a \rightarrow a^{-(\alpha-1)} \| (B_\psi f)(a,\cdot) \| _{L ^{p,\sigma}}^q  \in L^{q,\sigma}(\mathbb{R^+})$ then $ f \in  BH^{p,q}_{\alpha-1, \sigma} $. The theorem is established for $ 1 < \alpha < 2$, a recurrence on $[\alpha]$ gives the final result. 
\end{proof}	
\begin{corol}
	Let $ f \in B^{p,q}_{\alpha, \sigma} (\mathbb{R}^+) \left(p,q>1 ,  \alpha \neq \mathbb{Z}  \right) $ , then
\begin{eqnarray*}
||f||_{B^{p,q}_{\alpha, \sigma}}=||f||_{L^{p,\sigma} (\mathbb{R}^+)}+|f|_{{B^{p,q}_{\alpha, \sigma}}} 
\end{eqnarray*}
where $|f|_{{B^{p,q}_{\alpha, \sigma}}}$  is equal to 
\begin{eqnarray*}
	|f|^q_{B^{p,q}_{\alpha, \sigma}}= \int\limits_{0}^{\infty} \left( h^{-\alpha} w_{p} (\phi) (h) \right)^q \frac{dh}{h}  \approx \int ^ \infty _0 \left[ a^{-\alpha} \| B_{\psi} (.,a) \|_{L^{p, \sigma}}  \right]^q \frac{da}{ a} .
\end{eqnarray*}
\end{corol}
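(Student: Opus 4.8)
The statement packages the two preceding theorems into a single norm equivalence, so the plan is to read off both inequalities and assemble them, rather than to prove anything genuinely new. By definition the Besov--Hankel norm splits as $\|f\|_{B^{p,q}_{\alpha,\sigma}} = \|f\|_{L^{p,\sigma}(\mathbb{R}^+)} + |f|_{B^{p,q}_{\alpha,\sigma}}$, where the seminorm is the quantity $|f|_{B^{p,q}_{\alpha,\sigma}} = \left(\int_0^\infty (h^{-\alpha} w_p(f)(h))^q \, dh/h\right)^{1/q}$ appearing in the definition of $BH^{p,q}_{\alpha,\sigma}$ in \eqref{dkp6: eq 1.21}. Since the $L^{p,\sigma}$ term is identical on both sides of the asserted relation, it suffices to prove that the seminorm is comparable to the wavelet functional $W(f) := \left(\int_0^\infty [a^{-\alpha}\|B_\psi(\cdot,a)\|_{L^{p,\sigma}}]^q \, da/a\right)^{1/q}$, i.e. to produce constants $0 < c \le C < \infty$ depending only on $p,q,\alpha,\psi$ with $c\,|f|_{B^{p,q}_{\alpha,\sigma}} \le W(f) \le C\,|f|_{B^{p,q}_{\alpha,\sigma}}$.

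For the upper estimate I would invoke Theorem \ref{dkp6:1 2.4} directly: inequality \eqref{dkp6:eq 2.9} bounds the left-hand side, which is exactly $W(f)$, by a constant multiple of the Besov seminorm, the constant being the weight $\|z^{\alpha-[\alpha]}\psi\|_{L^{1,\sigma}}$. This is finite precisely because $(z^{\alpha-[\alpha]}\psi) \in L^{1,\sigma}$ by hypothesis and $\psi$ carries $[\alpha]+1$ cancellations, so it yields $W(f) \le C\,|f|_{B^{p,q}_{\alpha,\sigma}}$ with $C = \|z^{\alpha-[\alpha]}\psi\|_{L^{1,\sigma}}$.

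For the reverse estimate I would invoke Theorem \ref{dkp6:1 2.5}: inequality \eqref{dkp6:eq 2.13} bounds $\|h^{-(\alpha-[\alpha])} w_{p,\sigma}(f^{([\alpha])},h)\|_{L^{q,\sigma}}$ --- which, after the recurrence on $[\alpha]$ already carried out in that proof, is comparable to the full seminorm $|f|_{B^{p,q}_{\alpha,\sigma}}$ --- by the factor $A_\psi^{-1}\left(\tfrac{2}{\alpha-[\alpha]}\|\psi^{[\alpha]}\|_{L^{1,\sigma}} + \tfrac{1}{1-(\alpha-[\alpha])}\|\psi^{[\alpha]+1}\|_{L^{1,\sigma}}\right)$ times $W(f)$. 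Because $\alpha \notin \mathbb{Z}$, both $\alpha-[\alpha]$ and $1-(\alpha-[\alpha])$ are strictly positive, so this constant is finite, and it is positive since $A_\psi > 0$; taking its reciprocal gives $c\,|f|_{B^{p,q}_{\alpha,\sigma}} \le W(f)$. Combining the two bounds yields the asserted $\approx$, and adding the common term $\|f\|_{L^{p,\sigma}}$ to both comparable quantities produces the stated decomposition of $\|f\|_{B^{p,q}_{\alpha,\sigma}}$.

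The main obstacle I anticipate is not a new estimate but the bookkeeping required to make the recurrences in the two theorems line up into a single equivalence: Theorem \ref{dkp6:1 2.4} is phrased with the weight $z^{\alpha-[\alpha]}$ on $\psi$, whereas Theorem \ref{dkp6:1 2.5} controls the modulus of the derivative $f^{([\alpha])}$. One must therefore verify that passing from $w_{p,\sigma}(f^{([\alpha])},h)$ back to $w_{p,\sigma}(f,h)$, by integrating the derivative estimates $[\alpha]$ times, costs only a finite $\alpha$-dependent factor and does not degrade the comparability. Once this reduction to the fractional part $\alpha-[\alpha] \in (0,1)$ --- which is exactly where the non-integrality hypothesis on $\alpha$ is used --- is carried out, the equivalence follows at once.
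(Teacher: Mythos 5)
Your proposal is correct and matches the paper's intent exactly: the paper offers no written proof for this corollary, since it is precisely the packaging of Theorem \ref{dkp6:1 2.4} (the upper bound of the wavelet functional by the Besov--Hankel seminorm) and Theorem \ref{dkp6:1 2.5} (the converse bound) into a two-sided equivalence, which is what you assemble. Your closing remark about the bookkeeping of the recurrence on $[\alpha]$ and the passage between $w_{p,\sigma}(f^{([\alpha])},h)$ and $w_{p,\sigma}(f,h)$ identifies a looseness that is present in the paper's own theorem statements rather than a defect in your argument.
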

\begin{rem} Considerable work has already been done on the characterization of Besov-k-Hankel norms by means of k-Hankel wavelet transform and Mehler-Besov-Fock spaces by using  Mehler-Fock wavelet transform etc.. A further research in the context of different types of Besov space related to the different integral transform is needed.
\end{rem}
\section*{acknowledgement}
	The research of the second author is supported by University Grants Commission ( UGC), grant number: F.No. 16-6(DEC. 2017)/2018(NET/CSIR),  New Delhi, India.
\thebibliography{00}

\bibitem{rspathak1997} Pathak , R.S., Integral Transforms of Generalized Functions and Their Applications, Gordon and Breach Science Publishers, UK, 1997.
\bibitem{rspathak2003} Pathak, R. S.and  Dixit, M. M. , Continuous and discrete Bessel wavelet transforms, J. Comput. Appl. Math. 160 (2003), no. 1-2, 241-250.
\bibitem{rspathak2011} Pathak, R. S., Upadhyay, S. K. and  Pandey, R. S., The Bessel wavelet convolution product. Rend. Semin. Mat. Univ. Politec. Torino 69 (2011), no. 3, 267-279.
\bibitem{rspathak2001} Pathak, R. S.and  Upadhyay, S. K. $L_p^\mu$-boundedness of the pseudo-differential operator associated with the Bessel operator. J. Math. Anal. Appl. 257 (2001), no. 1, 141-153.
\bibitem{peba1996}  Perrier, Valérie; Basdevant, Claude, Besov norms in terms of the continuous wavelet transform. Application to structure functions. Math. Models Methods Appl. Sci. 6 (1996), no. 5, 649-664.
\bibitem{upsi2015} Upadhyay, S. K. and  Singh, Reshma,  Integrability of the continuum Bessel wavelet kernel. Int. J. Wavelets Multiresolut. Inf. Process. 13 (2015), no. 5, 1550032, 13 pp.
\bibitem{hi1960} Hirschman, I. I., Jr. Variation diminishing Hankel transforms. J. Analyse Math. 8 (1960/61), 307-336.
\bibitem{wt1958} Watson, G. N., A Treatise on the Theory of Bessel Functions, Cambridge University Press, Cambridge, 1958.
\bibitem{bjrm1998} Betancor, Jorge J., Rodríguez-Mesa, L.,  On the Besov-Hankel spaces. J. Math. Soc. Japan 50 (1998), no. 3, 781-788. 
\bibitem{hai1965}  Haimo, D.T. , Integral equations associated with Hankel convolutions, Trans. Amer. Math. Soc., 116
(1965), 330-375.
\bibitem{tk1997} Trime'che, K., Generalized Wavelet and Hypergroup Gordon and Breach, Amsterdam,( 1997).
\bibitem{ems1970} Stein, E. M., Singular Integral and Differentiability Properties of Functions, Princeton University Press, (1970).
\end{document}